\documentclass[11pt]{amsart}

\usepackage[left=1in, right=1in, top=1.2in, bottom=1.2in]{geometry}
\usepackage{microtype, mathtools, mathrsfs, enumerate, dsfont, esvect, tikz}
\setlength{\parskip}{.2em}

\usepackage{verbatim}
\usepackage[linktocpage]{hyperref}
\usepackage[linktocpage]{hyperref}
\hypersetup{
 colorlinks=true,  
 linkcolor=red,   
 citecolor=blue,   
 filecolor=magenta,  
 urlcolor=cyan   
}

\usepackage{amssymb}
\usepackage{url}

\hyphenation{arch-i-med-e-an}


\newtheorem{theorem}{Theorem}[section]

\newtheorem{lemma}[theorem]{Lemma}

\newtheorem{corollary}[theorem]{Corollary}

\theoremstyle{definition}
\newtheorem{question}[theorem]{Question}
\newtheorem{definition}[theorem]{Definition}

\theoremstyle{plain}
\numberwithin{equation}{theorem}

\theoremstyle{remark}

\newif\ifhascomments \hascommentstrue
\ifhascomments
 \newcommand{\dragos}[1]{{\color{red}[[\ensuremath{\bigstar\bigstar\bigstar} #1]]}}
 \newcommand{\matt}[1]{{\color{red}[[\ensuremath{\spadesuit\spadesuit\spadesuit} #1]]}}
\else
 \newcommand{\dragos}[1]{}
 \newcommand{\matt}[1]{}
\fi

\begin{document}

\title{Hopf Ore Extensions}

\author{Hongdi Huang}

\address{Department of Pure Mathematics\\
University of Waterloo\\
Waterloo, ON N2L 3G1\\
Canada}
\email{h237huan@uwaterloo.ca}

\begin{abstract}
Brown, O'Hagan, Zhang, and Zhuang gave a set of conditions on an automorphism $\sigma$ and a $\sigma$-derivation $\delta$ of a Hopf $k$-algebra $R$ for when the skew polynomial extension $T=R[x, \sigma, \delta]$ of $R$ admits a Hopf algebra structure that is compatible with that of $R$. In fact, they gave a complete characterization of which $\sigma$ and $\delta$ can occur under the hypothesis that $\Delta(x)=a\otimes x +x\otimes b +v(x\otimes x) +w$, with $a, b\in R$ and $v, w\in R\otimes_k R$, where $\Delta: R\to R\otimes_k R$ is the comultiplication map. In this paper, we show that after a change of variables one can in fact assume that $\Delta(x)=\beta^{-1}\otimes x +x\otimes 1 +w$, with $\beta $ is a grouplike element in $R$ and $w\in R\otimes_k R,$ when $R\otimes_k R$ is a domain and $R$ is noetherian. In particular, this completely characterizes skew polynomial extensions of a Hopf algebra that admit a Hopf structure extending that of the ring of coefficients under these hypotheses.  We show that the hypotheses hold for domains $R$ that are noetherian cocommutative Hopf algebras of finite Gelfand-Kirillov dimension.
\end{abstract}

\subjclass[2010]{
16T05, 
18G20, 
16S35. 
}

\keywords{Hopf algebras, Ore Extensions, Crossed products}

\thanks{The author acknowledges support from the National Sciences and Engineering Research Council of Canada.}

\maketitle

\section{Introduction}
Throughout this paper, we take $k$ to be a field and all algebras are over $k$. Given a $k$-algebra $R$, a $k$-algebra automorphism $\sigma$ of $R$, and a $k$-linear $\sigma$-derivation $\delta$ of $R$ that is, $\delta(ab)=\sigma(a)\delta(b) +\delta(a)b$ for $a,b\in R$, one can form a skew polynomial extension $T=R[x, \sigma, \delta]$, which is the $k$-algebra generated by $R$ and the indeterminate $x$, subject to the relations $xr-\sigma(r)x=\delta(r)$ for all $r\in R$. Let $R=(R, m, \mu, \Delta, \epsilon, S)$ be a Hopf algebra with multiplication $m$, comultiplication $\Delta$, unit $\mu$, counit $\epsilon$ and antipode $S$. One can refer to the excellent books \cite{LS,M} for further background on Hopf algebras.
Panov \cite{Pa} asked the following natural question: Given a Hopf algebra $R$, for which automorphisms $\sigma$ and $\sigma$-derivations $\delta$ does the skew polynomial ring $T=R[x, \sigma, \delta]$ have a Hopf algebra structure extending the given structure on $R$? Panov answered this question under this hypothesis that $x$ is a skew primitive element of $T$; i.e., $\Delta(x)=x\otimes a_1+a_2\otimes x$, with $a_1, a_2\in R$. This does not give a complete answer, however, as there are examples of Hopf algebra extensions where this hypothesis does not hold. In particular, Brown, O'Hagan, Zhang, and Zhuang \cite{Gzhuang} gave such an example; namely, if we take $H$ to be the coordinate ring of the group $G$ of upper-triangular $3\times 3$ unipotent complex matrices then $H$ is generated as a $\mathbb{C}$-algebra by the coordinate functions $x,y,z$, where evaluating $x, y$ and $ z$ at an element of $G$ corresponds to taking respectively the $(1, 3)$-, $(1, 2)$-, and $(2, 3)$-entry of the element. Then $H=k[y, z][x]$ with coefficient Hopf algebra $R=k[y, z]$, but $x$ is not skew primitive, since a straightforward computation shows that $\Delta(x)=x\otimes 1+1\otimes x+y\otimes z$. To deal with such examples, \cite{Gzhuang} relaxed Panov's hypothesis and studied skew polynomial extensions of Hopf algebras in which $\Delta(x)$ is of the form $s\otimes x+ x\otimes t+ v(x\otimes x)+w$, where $s, t\in R$ and $v, w\in R\otimes_k R$. In addition, Brown et al. \cite{Gzhuang} gave the following definition. 
\begin{definition}
\label{defn:BOZZ}
Let $R$ be a Hopf $k$-algebra. A \emph{Hopf Ore Extension} (HOE) of $R$ is a $k$-algebra $T$ such that: 
\begin{enumerate}
 \item $T$ is a Hopf $k$-algebra with Hopf subalgebra $R$;
 \item there exists an algebra automorphism $\sigma$ and a $\sigma$-derivation $\delta$ of $R$ such that $T=R[x, \sigma, \delta]$;
 \item there are $a, b\in R$ and $w, v\in R\otimes_k R$ such that
 $\Delta(x)=a\otimes x+ x\otimes b+v(x\otimes x)+w.$
\end{enumerate} 
\end{definition}
Using this definition \cite{Gzhuang} gave a complete list of conditions on $\sigma$ and $\delta$ for when $T$ is a HOE of $R$. In addition, they asked the following question.
\begin{question}
\label{quest} Does the third condition in Definition \ref{defn:BOZZ} follow from the first two, after a change of the variable $x$? 
\end{question}
This is an important question, as it is unclear in general whether a skew polynomial extension of a Hopf algebra $R$ that is itself a Hopf algebra should have the property that the indeterminate $x$ is so well-behaved under the comultiplication map.

If one could answer their question, one can remove this final hypothesis and give a complete answer to Panov's original question. We note that in the case when $R$ is a connected Hopf algebra (that is, a Hopf algebra whose coradical is the base field), Brown, O'Hagan, Zhang, and Zhuang \cite{Gzhuang} showed that the answer to their question is affirmative and that after a change of variables one can have $\Delta (x)=1\otimes x + x\otimes 1+ w$ for some $w\in R\otimes_k R$. Recently, it was shown in the case that $R$ is a commutative affine Hopf $k$-algebra and $k$ is algebraically closed and of characteristic zero then after a change of variables one has $\Delta(x)=s\otimes x+ x\otimes t+ w$ \cite{Bell}. This, then, gives another instance of when the question of Brown, O'Hagan, Zhang, and Zhuang \cite{Gzhuang} has a positive answer. In this paper, we will show that under the hypotheses that $R\otimes_k R$ is a domain and $R$ is noetherian then after a change of variables we have $\Delta(x)=\beta^{-1}  \otimes x+ x\otimes 1 +w$, with $w\in R\otimes_k R$ and $\beta $ a grouplike element of $R$. We are unaware of any Hopf algebra $R$ that is a domain for which $R\otimes_k R$ is not a domain and in light of our work, it would be interesting if one could prove that $R\otimes_k R$ is a domain whenever $R$ is, when $R$ is a Hopf algebra. Using this result, along with earlier work of \cite{Gzhuang}, we obtain the following theorem.

\begin{theorem} 
Let $R$ be a noetherian Hopf $k$-algebra and let $T=R[x; \sigma, \delta]$ be an Ore extension of $R$. Suppose that $R\otimes_k R$ is a domain. Then $T$ has a Hopf algebra structure extending that of $R$ if and only if after a change of variables we have the following:
\begin{enumerate}
\item there exists a grouplike element $\beta$ of $R$ and $w\in R\otimes_k R$ such that $\Delta(x)=\beta^{-1}\otimes x+x\otimes 1 +w$, and $S(x)=-\beta(x+\sum w_1S(w_2))$ and $\beta\sum w_1S(w_2)=\sum S(w_1)w_2$;
\item there is a character $\chi : R \longrightarrow k$ such that \[ \sigma(r)=\sum\chi(r_1)r_2=\sum \beta^{-1}  r_1\beta \chi(r_2)=\textrm{ad}(\beta^{-1} )\circ \tau^r_{\chi}(r);\]
for all $r\in R$, where $\sigma$ is a left winding automorphism $\tau^l_{\chi}$, and is the composition of the corresponding right winding automorphism with conjugation by $\beta $;
\item the $\sigma$-derivation $\delta$ satisfies the relation
\[ \Delta\delta(r)-\sum \delta(r_1)\otimes r_2-\sum \beta ^{-1}r_1\otimes \delta(r_2)-w\Delta(r)-\Delta\sigma(r)w=0\]
and \[w\otimes 1+(\Delta\otimes I)(w)=\beta^{-1} \otimes w+(I\otimes \Delta)w. \]
\end{enumerate}
\label{thm:main1}
 \end{theorem}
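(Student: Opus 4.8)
The reverse implication is routine: given the displayed data, one checks that the maps $\Delta,\epsilon,S$ defined on $T$ by the stated formulas and agreeing with the Hopf structure of $R$ respect the Ore relation $xr=\sigma(r)x+\delta(r)$ (this is exactly conditions (ii) and (iii)) and satisfy the coalgebra and antipode axioms (the remaining identities in (i) and (iii)); it is also a special case of the sufficiency direction of the main theorem of \cite{Gzhuang}. So the work is in the forward implication, and its heart is the structural claim that, after a change of variables, $\Delta(x)=\beta^{-1}\otimes x+x\otimes 1+w$ with $\beta$ grouplike in $R$ and $w\in R\otimes_k R$ --- which in particular answers Question~\ref{quest} under these hypotheses. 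I would begin with two reductions. Replacing $x$ by $x-\epsilon(x)\cdot 1$ is admissible (it fixes $\sigma$ and changes $\delta$ only by an inner $\sigma$-derivation), so assume $\epsilon(x)=0$. Since $R\otimes_k R$ is a domain so is $R$, hence so is $T=R[x;\sigma,\delta]$; moreover $T\otimes_k T\cong (R\otimes_k R)[x\otimes 1;\sigma\otimes\id,\delta\otimes\id][1\otimes x;\id\otimes\sigma,\id\otimes\delta]$ is an iterated skew polynomial ring over the domain $R\otimes_k R$, hence a domain, and likewise $T^{\otimes 3}$. Filter $T\otimes_k T$ and $T^{\otimes 3}$ by total degree in the ``$x$-variables''; the associated graded rings are again iterated skew polynomial rings over $R\otimes_k R$ (resp.\ $R^{\otimes 3}$), in particular domains, so that the symbol of a product is the product of the symbols.

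The main step is to show $\Delta(x)=a\otimes x+x\otimes b+v(x\otimes x)+w$ for some $a,b\in R$ and $v,w\in R\otimes_k R$. Write $\Delta(x)=\sum_{i,j}c_{ij}(x^i\otimes x^j)$ with $c_{ij}\in R\otimes_k R$, almost all zero; the counit axiom $(\epsilon\otimes\id)\Delta(x)=x=(\id\otimes\epsilon)\Delta(x)$ forces $(\epsilon\otimes\id)c_{0j}$ and $(\id\otimes\epsilon)c_{i0}$ to equal $1$ when $j=1$, resp.\ $i=1$, and $0$ otherwise. Applying $\Delta$ to the Ore relation gives $\Delta(x)\Delta(r)=\Delta(\sigma(r))\Delta(x)+\Delta(\delta(r))$ for all $r\in R$, and coassociativity gives $(\Delta\otimes\id)\Delta(x)=(\id\otimes\Delta)\Delta(x)$; comparing symbols of these identities in the graded domains --- using that $\Delta\otimes\id$ and $\id\otimes\Delta$ raise the $x$-degree in a controlled way and that symbols multiply --- bounds the total degree of $\Delta(x)$ by $2$ and kills all $c_{ij}$ with $i+j\geq 2$ except possibly $c_{11}$, leaving the claimed form. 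A further pass through the coassociativity and counit axioms on this expression forces $\epsilon(a)=\epsilon(b)=1$, $\Delta(a)=a\otimes a$ and $\Delta(b)=b\otimes b$ (so $a,b$ are grouplike, hence units of $R$), and forces $v=0$ --- this is exactly the point where the domain hypothesis is essential: a nonzero $v$ would, via the tight multiplicative constraints imposed on it by the Hopf axioms, produce zero divisors in $R\otimes_k R$, so the ``second kind''/quantum Hopf Ore extensions allowed in \cite{Gzhuang} are excluded here. Finally, replacing $x$ by $b^{-1}x$ (admissible since $b$ is a unit) converts $\Delta(x)$ into $\beta^{-1}\otimes x+x\otimes 1+w'$ with $\beta$ grouplike and $w'\in R\otimes_k R$.

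With this normal form in hand, conditions (i)--(iii) are extracted exactly as in \cite{Gzhuang}. The antipode axioms $m(S\otimes\id)\Delta(x)=0=m(\id\otimes S)\Delta(x)$ give the formula $S(x)=-\beta(x+\sum w_1S(w_2))$ and the identity $\beta\sum w_1S(w_2)=\sum S(w_1)w_2$; extracting the degree-one-in-$x$ part of $\Delta(x)\Delta(r)=\Delta(\sigma(r))\Delta(x)+\Delta(\delta(r))$ identifies $\sigma$ with the left winding automorphism $\tau^l_{\chi}$ of a character $\chi$ of $R$ and with $\textrm{ad}(\beta^{-1})\circ\tau^r_{\chi}$; extracting the degree-zero part yields the $\sigma$-derivation identity of (iii); and coassociativity applied to $\Delta(x)$ gives the compatibility $w\otimes 1+(\Delta\otimes I)(w)=\beta^{-1}\otimes w+(I\otimes\Delta)w$. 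The noetherian hypothesis enters here as in \cite{Gzhuang} (e.g.\ to ensure $T$ is a noetherian Hopf algebra and to justify the winding-automorphism description of $\sigma$). The one genuinely delicate point is the symbol comparison of the second paragraph together with the vanishing of $v$; everything else is bookkeeping with the Hopf axioms, largely already done in \cite{Gzhuang}.
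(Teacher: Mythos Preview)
Your sketch has a genuine gap at the crucial step. After the degree/symbol argument (which is essentially \cite[Lemma~1, \S2.2]{Gzhuang}), what you actually get is
\[
\Delta(x)=s\,(1\otimes x)+t\,(x\otimes 1)+v\,(x\otimes x)+w
\]
with $s,t,v,w\in R\otimes_k R$, \emph{not} $\Delta(x)=a\otimes x+x\otimes b+\cdots$ with $a,b\in R$. The counit axiom only gives $(\epsilon\otimes I)(s)=1=(I\otimes\epsilon)(t)$, which does not force $s\in R\otimes 1$ or $t\in 1\otimes R$; and the symbol comparison certainly does not. Consequently your next sentence --- that coassociativity forces $\Delta(a)=a\otimes a$, $\Delta(b)=b\otimes b$ --- has no meaning, and your final change of variables $x\mapsto b^{-1}x$ is not available. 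Passing from general $s,t\in R\otimes_k R$ to the normal form $\beta^{-1}\otimes x+x\otimes 1+w'$ is precisely the content of the paper's Lemma~\ref{l2}: one sets $\alpha=(I\otimes\epsilon)(s)$, uses the coassociativity identity $(I\otimes\Delta)(s)(1\otimes s)=(\Delta\otimes I)(s)$ (valid only after $v=0$) together with the domain hypothesis to prove $\alpha$ is a \emph{unit} of $R$, then replaces $x$ by $\alpha^{-1}x$ and again by $x\beta^{-1}$, and only at that point deduces that the surviving coefficient $\beta$ is grouplike. None of this is visible in your outline.

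A secondary point: you misplace the role of the noetherian hypothesis. It is not used ``to justify the winding-automorphism description of $\sigma$'' but rather, via Skryabin's theorem \cite{Skr}, to guarantee that the antipode $S$ is bijective on $T$, so that $S(x)=ax+b$ with $a$ a unit. This identity is an input to the paper's proof that $v=0$ (Lemma~\ref{l1}): one needs $\sum S(v_1)v_2=0$, hence $\sum\epsilon(v_1)\epsilon(v_2)=0$, which combined with the other coassociativity relations and the domain hypothesis gives $v(\alpha\otimes 1)=0$ and thus $v=0$. Your account of why $v$ vanishes (``would produce zero divisors'') is in the right spirit but omits these ingredients; in particular the argument for $v=0$ uses $s,t\in R\otimes_k R$ and does not rely on any grouplike property, contrary to the order you suggest.
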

 
In the above theorem, the hypothesis that $R$ is noetherian is needed to ensure that the antipode is bijective \cite{Skr}, which allows us to use work of \cite{Gzhuang} to get that $S(x)$ has a linear form. The hypothesis that $R\otimes_k R$ is a domain plays a more significant role. However, it appears to be difficult to show that $R\otimes_k R$ is a domain when $R$ is a Hopf algebra that is a domain. Rowen and Saltman \cite{RS} exhibit division $k$-algebras $E$ and $F$, both finite-dimensional over their centres and each containing an algebraically closed field $k$ of characteristic 0, such that $E\otimes_k F$ not a domain. Their construction is non-trivial and it does not obviously lend itself to produce a counterexample in the Hopf algebra case. In this paper, we shall show that $R\otimes_k R$ is a domain when $k$ is algebraically closed of characteristic zero and $R$ is a noetherian cocomutative Hopf algebra of finite Gelfand-Kirillov dimension that is a domain. In this case, one has that $R$ is isomorphic to the smash product of the enveloping algebra of a finite-dimensional Lie algebra $\mathcal{L}$ and a finitely generated nilpotent-by-finite group. The underlying Lie algebra $\mathcal{L}$ is generated by the primitive elements in $R$, and the nilpotent-by-finite group is just the group of grouplike elements of $R$, which acts on $\mathcal{L}$ via $k$-algebra automorphisms, giving the smash product structure. In this case, we prove the following theorem:
 \begin{theorem} Let $k$ be an algebraically closed field of characteristic zero and let $R$ be a noetherian cocommutative $k$-Hopf algebra of finite Gelfand-Kirillov dimension that is a domain. Then $R\otimes_k R$ is a domain. In particular, the results of Theorem \ref{thm:main1} apply in this setting.
\label{thm:main2}
\end{theorem}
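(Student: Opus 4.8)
The plan is to combine the structure theorem for cocommutative Hopf algebras with a chain of ring-theoretic reductions ending at a crossed-product statement of Farkas--Snider type. First, since $k$ is algebraically closed of characteristic zero and $R$ is cocommutative, $R$ is pointed and the Cartier--Kostant--Milnor--Moore theorem (recalled in the introduction) gives $R\cong U(\mathfrak g)\#kG$, where $\mathfrak g=P(R)$ is the Lie algebra of primitives and $G=G(R)$ is the group of grouplike elements, acting on $\mathfrak g$ by Lie-algebra automorphisms. As noted in the introduction, the hypotheses force $\mathfrak g$ to be finite-dimensional (since $U(\mathfrak g)\subseteq R$ and $\operatorname{GKdim}U(\mathfrak g)=\dim_k\mathfrak g\le\operatorname{GKdim}R<\infty$) and $G$ to be a finitely generated nilpotent-by-finite group, which is moreover torsion-free because $kG\subseteq R$ is a domain. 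Since smash products and $k$-tensor products interchange and $U(\mathfrak g)\otimes_kU(\mathfrak g)\cong U(\mathfrak g\oplus\mathfrak g)$, $kG\otimes_kkG\cong k[G\times G]$,
\[
R\otimes_kR\;\cong\;\bigl(U(\mathfrak g)\otimes_kU(\mathfrak g)\bigr)\#\bigl(kG\otimes_kkG\bigr)\;\cong\;U(\mathfrak h)\#kH ,
\]
with $\mathfrak h:=\mathfrak g\oplus\mathfrak g$ finite-dimensional and $H:=G\times G$ again finitely generated, torsion-free, and nilpotent-by-finite, acting on $\mathfrak h$ by Lie automorphisms. Hence it suffices to prove the self-contained claim: \emph{for every finite-dimensional Lie algebra $\mathfrak h$ and every finitely generated torsion-free nilpotent-by-finite group $H$ acting on $\mathfrak h$ by Lie-algebra automorphisms, the skew group ring $U(\mathfrak h)\#kH$ is a domain.} Applied to $(\mathfrak g,G)$ this is consistent with the hypothesis that $R$ is a domain; applied to $(\mathfrak g\oplus\mathfrak g,\,G\times G)$ it yields the theorem.

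To prove the claim I would first treat the case where $H$ is finitely generated torsion-free nilpotent. By Mal'cev theory such an $H$ admits a central series $1=H_0\triangleleft H_1\triangleleft\cdots\triangleleft H_r=H$ with $H_i/H_{i-1}\cong\mathbb Z$; since $\mathbb Z$ is free, each extension $1\to H_{i-1}\to H_i\to\mathbb Z\to1$ splits, so $H_i=H_{i-1}\rtimes\langle t_i\rangle$ and
\[
U(\mathfrak h)\#kH_i\;=\;\bigl(U(\mathfrak h)\#kH_{i-1}\bigr)[\,t_i^{\pm1};\sigma_i\,] ,
\]
a skew-Laurent extension, $\sigma_i$ being conjugation by $t_i$. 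A skew-Laurent extension of a domain is again a domain, and $U(\mathfrak h)$ is a domain by the PBW theorem, so induction on $i$ shows that $U(\mathfrak h)\#kH$ is a noetherian domain.

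For the general case, choose a normal subgroup $N\triangleleft H$ of finite index that is finitely generated torsion-free nilpotent (a nilpotent normal subgroup of finite index exists since $H$ is nilpotent-by-finite, and it is automatically torsion-free and finitely generated as a finite-index subgroup of $H$). By the previous paragraph $S:=U(\mathfrak h)\#kN$ is a noetherian domain, hence an Ore domain with division ring of fractions $Q:=\operatorname{Frac}(S)$. Choosing coset representatives $g_1,\dots,g_m$ of $N$ in $H$ exhibits $T:=U(\mathfrak h)\#kH=\bigoplus_i Sg_i=\bigoplus_i g_iS$ as a finite normalizing extension of $S$, each $g_i$ normalizing $S$ by conjugation (an automorphism of $S$ because $N\triangleleft H$ and $g_i$ acts on $\mathfrak h$). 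The Ore set $S\setminus\{0\}$ is stable under these automorphisms and its elements are regular in $T$ (an element of $S$ annihilating a nonzero element of $T=\bigoplus Sg_i$ would annihilate a coefficient, which is impossible in the domain $S$), so $S\setminus\{0\}$ is an Ore set in $T$ and localization gives an embedding $T\hookrightarrow T(S\setminus\{0\})^{-1}=Q*\overline H$, a crossed product of the division ring $Q$ with the finite group $\overline H:=H/N$. Thus everything reduces to showing that $Q*\overline H$ is a domain.

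This last step is the crux and the expected main obstacle. Since $\operatorname{char}k=0$, Maschke's theorem makes $Q*\overline H$ a finite-dimensional semisimple $Q$-algebra; as $H$ is torsion-free the skew group ring $T$ is prime (standard), so $Q*\overline H=Q(T)$ is simple artinian, say $M_n(\Delta)$, and $T$ is a domain if and only if $n=1$. A crossed product of a division ring with a finite group is in general a genuine matrix ring (for instance $k*(\mathbb Z/2)\cong k\times k$ is not even prime), so this step cannot be formal; it must exploit that $Q*\overline H$ is the crossed product arising from the \emph{torsion-free} extension $1\to N\to H\to\overline H\to1$. This is precisely the Farkas--Snider phenomenon, and I would finish either by invoking a crossed-product generalization of the Farkas--Snider theorem from the theory of infinite crossed products (Passman) applied to $Q*\overline H$, or, failing a directly citable form, by rerunning the $K_0$/reduced-rank induction of Farkas--Snider for the torsion-free polycyclic-by-finite group $H$ and transporting the conclusion along the exact localization $S\setminus\{0\}\mapsto(S\setminus\{0\})^{-1}T$. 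Granting that $Q*\overline H$ is a domain, $T=U(\mathfrak h)\#kH$ is a domain for the relevant $(\mathfrak h,H)$; taking $(\mathfrak h,H)=(\mathfrak g\oplus\mathfrak g,\,G\times G)$ shows $R\otimes_kR$ is a domain, and since $R$ is noetherian this places us exactly in the setting where Theorem~\ref{thm:main1} applies.
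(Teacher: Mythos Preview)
Your reduction is exactly the paper's: apply the Cartier--Kostant structure theorem to write $R\cong U(\mathfrak g)\#kG$ with $\mathfrak g$ finite-dimensional and $G$ finitely generated torsion-free nilpotent-by-finite, then observe $R\otimes_kR\cong U(\mathfrak g\oplus\mathfrak g)\#k[G\times G]$ and reduce to showing this crossed product is a domain.

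The difference is only in how the crossed-product step is dispatched. The paper does not split off the nilpotent case or localize to a finite crossed product; it simply invokes Passman's result (Corollary~37.11 of \emph{Infinite Crossed Products}) stating that if $A$ is an Ore domain and $G$ is torsion-free with a finite subnormal series whose quotients are locally polycyclic-by-finite (in particular, $G$ torsion-free polycyclic-by-finite), then $A\star G$ is an Ore domain. Since $U(\mathfrak g\oplus\mathfrak g)$ is a noetherian domain and $G\times G$ is torsion-free polycyclic-by-finite, this finishes the argument in one line. Your detour---handling the nilpotent part by iterated skew-Laurent extensions and then reducing the finite-index part to a Farkas--Snider type statement---is correct, but the black box you reach for at the end (``a crossed-product generalization of Farkas--Snider from Passman'') is precisely the result the paper cites directly, and it already subsumes your nilpotent case. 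So your intermediate work, while sound, is redundant once you grant yourself that citation. One small caution: your parenthetical ``since $H$ is torsion-free the skew group ring $T$ is prime (standard)'' is not as immediate for general smash products as it is for group algebras, though it is of course a consequence of the domain conclusion you are after.
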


The outline of this paper is as follows. In \S2, we prove Theorem \ref{thm:main1} and then in \S3, we give the proof of Theorem \ref{thm:main2}.  We conclude in \S4 with some pertinent remarks and questions.

\section{Proof of Theorem \ref{thm:main1}}
In this section, we give a proof of our main result. Throughout this section, we take $R$ to be a Hopf $k$-algebra and $T=R[x; \sigma, \delta]$ to be a skew polynomial extension of $R$. Suppose that $T$ admits a Hopf algebra structure extending that of $R$. Recall that $T$ is a free left $R$-module with basis $\{x^i\,\, |\,\, i\geq 0\}$ and $T\otimes_k T$ is a left $R\otimes_k R$-module with basis $\{x^i\otimes x^j \colon i,j\geq 0\}$. Thus we have that $$ \Delta(x)=\sum_{i,j}w_{i,j}x^i\otimes x^j,$$ with $w_{i,j}\in R\otimes_k R$. In fact, the hypothesis that $R\otimes_k R$ is a domain gives that 
\begin{equation}
\label{eq:Delta}
\Delta(x)=s(1\otimes x) +t(x\otimes 1)+v(x\otimes x) +w,
\end{equation} with $s, t, v, w\in R\otimes_k R$ (see \cite[Lemma 1, \S2.2]{Gzhuang}). After a change of variables and corresponding adjustments to $\delta$, we may assume that $\epsilon(x)=0.$ For if $\epsilon(x)=c\ne 0\in k$, then let $y=x-c$ and so $\epsilon(y)=0$, $yr=\sigma(r)y+\sigma(r)c+\delta(r)-cr$. Let $\delta'(r)=\delta(r)+\sigma(r)c-cr$. Then a straighforward computation shows that $\delta'(ab)=\sigma(a)\delta'(b)+\delta'(a)b$, whence $\delta'$ is a $\sigma$-derivation. Therefore, $R[x, \sigma, \delta]\cong R[y, \sigma, \delta']$.

In the following Lemmas, we will show much more: after a change of variables we have $\Delta(x)=\beta^{-1}  \otimes x+ x\otimes 1 + w$, where $\beta $ is a grouplike element of $R$. This is a significant step, as it shows that $\Delta(x)$ can be assumed to have a much simpler form, which gives an explicit Hopf algebra structure on the Ore extension $T$ that is compatible with the Hopf structure on $R$.

 To begin, we list the following facts which are useful in the proof of the subsequent Lemmas. Using coassociativity of $\Delta: T\to T\otimes T$ and the form given in Equation (\ref{eq:Delta}) and then comparing the coefficients of all relevant terms (e.g., $x\otimes
 x\otimes x$, $1\otimes x\otimes 1,$ $1\otimes x\otimes x,$ $1\otimes 1\otimes x,$ $x\otimes 1\otimes 1,$ $x\otimes x\otimes 1$) on both sides of the equation $(I\otimes \Delta)\Delta(x)=(\Delta\otimes I)\Delta(x)$, we obtain the following equations:
 \begin{align} 
 \label{te1}(I\otimes \Delta)(v)\cdot(1\otimes v)&=(\Delta\otimes I)(v)\cdot(v\otimes 1)\\ 
 \label{te2}(I\otimes \Delta)(s)\cdot(1\otimes t)&=(\Delta\otimes I)(t)\cdot(s\otimes 1)\\ 
 \label{te4}(I\otimes \Delta)(s)\cdot(1\otimes v)&=(\Delta\otimes I)(v)\cdot(s\otimes 1)\\ 
 \label{te5}(I\otimes \Delta)(s)\cdot(1\otimes s)&=(\Delta\otimes I)(s) + (\Delta\otimes I)(v)\cdot(w\otimes 1)\\ 
 \label{te7}(I\otimes \Delta)(v)\cdot(1\otimes t)&=(\Delta\otimes I)(t)\cdot(v\otimes 1).
\end{align}
We use these equations to derive additional useful equations. Throughout, we use Sweedler notation to make things more compact, that is, we simply write $f=\sum f_1\otimes f_2$ for an element $f$ of $R\otimes_k R$, with the understanding that this is actually a sum of pure tensors. We note that we may always assume, in addition, that when we choose an expression for an element $\sum_{i=1}^d a_i \otimes b_i\in R\otimes_k R$, that $\{a_1,\ldots ,a_d\}$ and $\{b_1,\ldots ,b_d\}$ are $k$-linearly independent sets. We set 
\begin{equation}\label{eq:alpha}
\alpha=(I\otimes \epsilon)(s)=\sum s_1 \epsilon(s_2),
\end{equation}
and 
\begin{equation}\label{eq:beta}
\beta=(\epsilon\otimes I)(t)=\sum \epsilon(t_1)(t_2)
\end{equation}
Observe that applying $I\otimes \epsilon \otimes \epsilon$ to Equation (\ref{te4}), we obtain on the left side
$$(I\otimes \epsilon \otimes \epsilon)((I\otimes \Delta)(s)\cdot(1\otimes v)),$$ which is 
$$\left(\sum s_1 \epsilon(s_2)\right)\cdot \left(\sum \epsilon(v_1)\epsilon(v_2)\right)=\alpha\cdot \left(\sum \epsilon(v_1)\epsilon(v_2)\right),$$
and on the right side, we obtain
$(I\otimes \epsilon \otimes \epsilon)((\Delta\otimes I)(v)\cdot(s\otimes 1))$, which is $$(\sum v_1\epsilon(v_2))\cdot \left(\sum \epsilon(s_2)s_1\right)=\left(\sum v_1\epsilon(v_2)\right)\cdot \alpha.$$
Thus we obtain the new equation
\begin{equation}
\label{alphav}
\alpha\cdot \left(\sum \epsilon(v_1)\epsilon(v_2)\right) = \left(\sum v_1\epsilon(v_2)\right)\cdot \alpha.
\end{equation}
We do not give the complete details of the following computations, as they can be done in a similar manner.
We apply $\epsilon \otimes \epsilon \otimes I$ to Equation (\ref{te1}) and we obtain
\begin{equation}
 \label{epsilonv}
 \left(\sum \epsilon(v_1)v_2\right)\cdot\left(\sum \epsilon(v_1)v_2\right)=\left(\sum \epsilon(v_1)v_2\right)\cdot\left(\sum \epsilon(v_1)\epsilon(v_2)\right).
\end{equation}
By a result of Skryabin \cite[Corollary 1]{Skr}, $S$ is bijective on $T$ and $R$ so we must have $S(x)=a x+b$ with $a,b\in R$ and $a$ a unit in $R$.
Notice that 
 \begin{equation}\label{eq:epsilonx}
  0=\epsilon(x)=m\circ(S\otimes I)\circ\Delta(x).
 \end{equation} 
 The coefficient of $x^2$ in the right side of Equation (\ref{eq:epsilonx}) is $\sum a\sigma(S(v_1)v_2)$, and the coefficient of $x^{2}$ on the left side of Equation  (\ref{eq:epsilonx}) is $0$. Since $a$ is a unit and $\sigma$ is an automorphism, we see that $\sum S(v_1)v_2=0$ and after the standard fact that $\epsilon \circ S=\epsilon$ then obtain that  \begin{equation}
  \sum \epsilon(v_1)\epsilon(v_2)=0.\label{eq:ISv}
 \end{equation} 
Now we apply the $(m\otimes I)\circ(I\otimes \epsilon \otimes I)$ to Equation  (\ref{te4}).  We obtain on the left side 
\begin{align*}
&(m\otimes I)\left((I\otimes \epsilon\otimes I)((I\otimes \Delta)(s)\cdot(1\otimes v))\right)\\
=&(m\otimes I)\left(\sum s_1\otimes (\sum \epsilon(s_{21})\epsilon(v_1)\otimes s_{22}v_2)\right)\\
=&\sum s_1\otimes s_2\sum \epsilon(v_1)v_2
\end{align*}
and on the right side
$$(m\otimes I)\circ(I\otimes \epsilon \otimes I)((\Delta\otimes I)(v)\cdot(s\otimes 1))=v(\sum s_1\epsilon(s_2))\otimes1).$$
So we can see that 
\begin{equation}
 \label{eq:sc} s\left(1\otimes (\sum \epsilon(v_1)v_2)\right) =v\left((\sum s_1\epsilon(s_2))\otimes 1\right).
 \end{equation}
\begin{lemma}\label{l1}
Let $R$ be a noetherian Hopf $k$-algebra and let $T=R[x;\sigma, \delta]$ admit a Hopf algebra structure with $R$ a Hopf subalgebra. Suppose that $R\otimes_k R$ is a domain. Then after a change of the variables with the property that $\epsilon(x)=0$ and corresponding adjustments to $\sigma$ and $\delta$, we can ensure that $v=0$ in Equation (\ref{eq:Delta}); namely, that $\Delta(x)=s(1\otimes x) +t(x\otimes 1)+w,$ with $s, t, w\in R\otimes_kR.$
\end{lemma}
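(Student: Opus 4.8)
The plan is to show that $v = 0$ by a careful analysis of the equations extracted from coassociativity and the counit/antipode conditions, exploiting crucially that $R \otimes_k R$ is a domain. The starting point is Equation~(\ref{eq:ISv}), $\sum \epsilon(v_1)\epsilon(v_2) = 0$, which feeds immediately into the other derived identities. For instance, plugging this into Equation~(\ref{eq:sc}) gives $s\bigl(1 \otimes \sum \epsilon(v_1) v_2\bigr) = v\bigl(\alpha \otimes 1\bigr)$, where $\alpha = (I \otimes \epsilon)(s)$; and Equation~(\ref{epsilonv}) collapses to $\bigl(\sum \epsilon(v_1) v_2\bigr)^2 = 0$ in $R \otimes_k R$, so since $R \otimes_k R$ is a domain we get $\sum \epsilon(v_1) v_2 = 0$. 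Feeding this back into Equation~(\ref{eq:sc}) yields $v \cdot (\alpha \otimes 1) = 0$, so either $\alpha = 0$ or $v = 0$. A symmetric computation — applying suitable combinations of $\epsilon$'s and $m$'s to Equation~(\ref{te7}) instead of Equation~(\ref{te4}), and using the analogous vanishing $\sum v_1 \epsilon(v_2) = 0$ (obtained the same way from the coefficient of $x^2$ on the other side, or from the counit axiom $(\epsilon \otimes I)\Delta(x) = x$) — should give $v \cdot (1 \otimes \beta) = 0$ with $\beta = (\epsilon \otimes I)(t)$, so either $\beta = 0$ or $v = 0$.

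The next step is to rule out the degenerate possibilities $\alpha = 0$ and $\beta = 0$. Applying $(I \otimes \epsilon)$ to the whole of Equation~(\ref{eq:Delta}) and using the counit axiom $(I \otimes \epsilon)\Delta(x) = x$ together with $\epsilon(x) = 0$, one reads off that $\alpha + (\text{contribution from } t, v, w) $ must equal the "$x$-part", forcing a relation that shows $\alpha$ (or its analogue) is a unit, or at least nonzero; more precisely, the counit axioms applied to Equation~(\ref{eq:Delta}) should yield $t(x \otimes 1) \mapsto (\epsilon\text{-reduced}) = \beta \cdot x + \cdots$ and hence $\beta$ cannot be $0$ if $x$ is to survive, and dually for $\alpha$ on the other side. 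Here I expect one genuinely needs the domain hypothesis again, plus the fact that $\{1, x, x^2, \dots\}$ is a free basis so that coefficients of $x$ can be compared unambiguously. Once $\alpha \neq 0$ (equivalently the relevant counit-reduction of $s$ is nonzero) we conclude $v = 0$ directly from $v \cdot (\alpha \otimes 1) = 0$ in the domain $R \otimes_k R$.

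Having shown $v = 0$, the remaining task in the lemma statement is bookkeeping: once $v = 0$, Equation~(\ref{eq:Delta}) reads $\Delta(x) = s(1 \otimes x) + t(x \otimes 1) + w$, which is already the desired form, so strictly no further change of variables is needed for this particular lemma — though one should check that the earlier normalization $\epsilon(x) = 0$ (with its adjustment $\delta \mapsto \delta'$) is compatible and that $\sigma$, $\delta$ are still a genuine automorphism and $\sigma$-derivation after any intervening substitution. I would phrase this as: the substitution already performed to arrange $\epsilon(x) = 0$ suffices, and with $v = 0$ the claimed shape of $\Delta(x)$ holds verbatim.

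\textbf{The main obstacle} I anticipate is \emph{not} the algebraic manipulation of the coassociativity identities — those are mechanical once one commits to applying the right projections $\epsilon \otimes \epsilon \otimes I$, $I \otimes \epsilon \otimes \epsilon$, $(m \otimes I)(I \otimes \epsilon \otimes I)$, etc. — but rather establishing rigorously that $\alpha$ and $\beta$ cannot both vanish, i.e. that the "purely quadratic" degenerate scenario $s = t = 0$, $\Delta(x) = v(x \otimes x) + w$ is genuinely impossible (or leads to a contradiction with $\epsilon(x) = 0$, coassociativity, and bijectivity of $S$). One must show that if $s$ and $t$ both counit-reduce to $0$ then the grouplike-type element forced by $v$ via Equation~(\ref{te1}) is incompatible with $x$ being a polynomial generator over the Hopf subalgebra $R$; the cleanest route is probably to show $v = v' (x \otimes x)$-type recursion forces $\Delta$ to land outside $T \otimes_k T$ unless $v = 0$, or to use the antipode relation $S(x) = ax + b$ (valid by Skryabin's theorem, since $R$ is noetherian) in $m \circ (S \otimes I)\Delta(x) = 0$ to pin down enough coefficients. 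That interplay between the domain hypothesis and the freeness of $T$ over $R$ is where the real content lies.
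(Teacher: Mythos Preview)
Your approach is correct and essentially the same as the paper's. The one point on which you misjudge things is difficulty: what you flag as ``the main obstacle'' --- showing $\alpha \neq 0$ --- is in fact a one-liner, and the paper dispatches it first. Applying $(\epsilon \otimes I)$ to Equation~(\ref{eq:Delta}) with $\epsilon(x)=0$ and comparing coefficients of $x$ (using freeness of $T$ over $R$) gives $\sum \epsilon(s_1)s_2 = 1$; applying $\epsilon$ once more yields $\epsilon(\alpha) = \sum \epsilon(s_1)\epsilon(s_2) = \epsilon(1) = 1$, so $\alpha \neq 0$. No separate treatment of a degenerate ``$s=t=0$'' scenario is needed, and the symmetric argument via Equation~(\ref{te7}) and $\beta$ is superfluous. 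One small slip: the identity $\bigl(\sum \epsilon(v_1)v_2\bigr)^2 = 0$ lives in $R$, not in $R\otimes_k R$; but $R$ is a domain as well (it embeds in $R\otimes_k R$), so your conclusion stands.
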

 
 \begin{proof}
 Suppose $R\otimes_k R$ is a domain. As argued in the above part, we have Equation (\ref{eq:Delta}) 
 \begin{align*} 
 \Delta(x)&=s(1\otimes x) +t(x\otimes 1)+v(x\otimes x) +w,
  \end{align*}
where $s, t, v, w\in R\otimes_k R.$ 
 Using the fact that $(\epsilon\otimes I)\circ \Delta(x)=(I\otimes \epsilon)\circ \Delta(x)= x$ and that $\epsilon(x)=0$ in Equation (\ref{eq:Delta}) gives $1=(\epsilon\otimes I)(s)= (I\otimes \epsilon)(t)$; that is, 
 \begin{equation}\label{eq: identity}
 1=\sum \epsilon(s_1)s_2=\sum t_1\epsilon(t_2).
 \end{equation}
 Equations (\ref{eq:alpha}), (\ref{eq:beta}) and (\ref{eq: identity}) tell us that 
 \begin{equation*}
 1=\epsilon(\alpha)=\epsilon(\beta),
 \end{equation*} so in particular $\alpha$ and $\beta$ are nonzero.
 Thus, Equations (\ref{alphav}) and (\ref{eq:ISv}) give
  \begin{equation*}
  \sum \epsilon(v_1)\epsilon(v_2) = \sum  v_1\epsilon(v_2)=0.
 \end{equation*}
 Further, Equation (\ref{epsilonv}) tells that
 \begin{equation*}
 \sum\epsilon(v_1)v_2=\sum\epsilon(v_1)\epsilon(v_2)=\sum v_1 \epsilon(v_2)=0.
  \end{equation*}
 Thus by  Equation (\ref{eq:sc}), 
  we see that  $0=v(\alpha \otimes 1).$
 Since $\alpha \otimes 1 \ne 0$, and $R\otimes_k R$ is a domain, we see that $v=0$. 
 Thus we have shown that $\Delta(x)=s(1\otimes x) + t(x\otimes 1) +w$.
 \end{proof}
 \begin{lemma}\label{l2}
 Let $R$ be a noetherian Hopf $k$-algebra and suppose that $T=R[x;\sigma, \delta]$ admits a Hopf algebra structure with $R$ a Hopf subalgebra. Suppose that $R\otimes_k R$ is a domain and $\Delta(x)=s(1\otimes x)+t(x\otimes 1) +w$, with $s,t,w\in R\otimes_k R$. Then after a change of the variable $x$, we can assume that $\Delta(x)=\beta^{-1}\otimes x+ x\otimes 1 + w'$, where $\beta$ is a grouplike element in $R$ and $w'=\sum w'_1\otimes w'_2\in R\otimes_k R$. Moreover, $S(x)=-\beta(x+\sum w'_1S(w'_2))$ and $\beta\cdot\left(\sum w'_1S(w'_2)\right)=\sum S(w'_1)w'_2$.
 \end{lemma}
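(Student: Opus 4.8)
The plan is to prove that the single substitution $x\mapsto\beta^{-1}x$, where $\beta=(\epsilon\otimes I)(t)$, already produces the asserted form, and that the only point which is not purely formal is the invertibility of this $\beta$ in $R$. To set up, I would first extract three consequences of coassociativity. Since $v=0$, Equation~(\ref{te5}) reads $(I\otimes\Delta)(s)\cdot(1\otimes s)=(\Delta\otimes I)(s)$, and comparing the coefficient of $x\otimes 1\otimes 1$ in $(I\otimes\Delta)\Delta(x)=(\Delta\otimes I)\Delta(x)$ gives the companion relation $(I\otimes\Delta)(t)=(\Delta\otimes I)(t)\cdot(t\otimes 1)$. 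Applying $I\otimes I\otimes\epsilon$ to the first of these, $\epsilon\otimes I\otimes I$ to the second, and $I\otimes\epsilon\otimes I$ to Equation~(\ref{te2}), and using the counit identities $(\epsilon\otimes I)(s)=(I\otimes\epsilon)(t)=1$ together with $\alpha=(I\otimes\epsilon)(s)$ and $\beta=(\epsilon\otimes I)(t)$, one obtains
\[
\Delta(\alpha)=s(1\otimes\alpha),\qquad \Delta(\beta)=t(\beta\otimes 1),\qquad s(1\otimes\beta)=t(\alpha\otimes 1).
\]
Recall from the proof of Lemma~\ref{l1} that $\epsilon(\alpha)=\epsilon(\beta)=1$, so $\alpha$ and $\beta$ are nonzero.

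The crucial step is to show that $\beta$ is a unit in $R$. Since $R$ is a Hopf subalgebra and $\beta\in R$, the antipode axiom gives $m\circ(S\otimes I)\circ\Delta(\beta)=\epsilon(\beta)1=1$; applying $m\circ(S\otimes I)$ to the right-hand side of the identity $\Delta(\beta)=t(\beta\otimes 1)$ and using that $S$ is an anti-homomorphism produces $S(\beta)\cdot m\bigl((S\otimes I)(t)\bigr)=1$, so $S(\beta)$ is right-invertible in $R$. Because $R$ is noetherian it is Dedekind-finite, hence $S(\beta)$ is in fact a unit of $R$; and since $S$ is bijective on $R$ by \cite[Corollary~1]{Skr}, the element $\beta=S^{-1}(S(\beta))$ is a unit, with $\beta^{-1}\in R$ and $\epsilon(\beta^{-1})=1$.

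Now I would carry out the substitution $x':=\beta^{-1}x$, which is a legitimate change of variable for the Ore extension — the new parameters are $\textrm{ad}(\beta^{-1})\circ\sigma$ and $r\mapsto\beta^{-1}\delta(r)$ — and which satisfies $\epsilon(x')=0$. Substituting $x=\beta x'$, $(1\otimes x)=(1\otimes\beta)(1\otimes x')$ and $(x\otimes 1)=(\beta\otimes 1)(x'\otimes 1)$ into $\Delta(x)=s(1\otimes x)+t(x\otimes 1)+w$, the coefficient of $x'\otimes 1$ becomes $\Delta(\beta^{-1})\,t(\beta\otimes 1)=\Delta(\beta^{-1})\Delta(\beta)=1\otimes 1$ by the second identity above, and the coefficient of $1\otimes x'$ becomes $\Delta(\beta^{-1})\,s(1\otimes\beta)=\Delta(\beta^{-1})\,t(\alpha\otimes 1)=(\beta^{-1}\alpha)\otimes 1$ by the third identity together with the second. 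Hence, writing $g:=\beta^{-1}\alpha$ and $w':=\Delta(\beta^{-1})w$, we get $\Delta(x')=g\otimes x'+x'\otimes 1+w'$; comparing the coefficient of $1\otimes 1\otimes x'$ on the two sides of coassociativity for $\Delta(x')$ forces $\Delta(g)=g\otimes g$, and $\epsilon(g)=\epsilon(\beta^{-1})\epsilon(\alpha)=1$, so $g$ — and therefore $g^{-1}$ — is grouplike. Relabelling $g^{-1}$ as $\beta$ and $x'$ as $x$ gives $\Delta(x)=\beta^{-1}\otimes x+x\otimes 1+w'$ with $\beta$ grouplike. For the remaining assertions I would feed this expression into the two antipode axioms: $m\circ(I\otimes S)\circ\Delta(x)=\epsilon(x)1=0$ gives $\beta^{-1}S(x)+x+\sum w'_1S(w'_2)=0$, i.e.\ $S(x)=-\beta\bigl(x+\sum w'_1S(w'_2)\bigr)$, whereas $m\circ(S\otimes I)\circ\Delta(x)=0$, using $S(\beta^{-1})=\beta$, gives $S(x)=-\beta x-\sum S(w'_1)w'_2$; comparing the two expressions for $S(x)$ yields $\beta\sum w'_1S(w'_2)=\sum S(w'_1)w'_2$.

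I expect the invertibility of $\beta$ to be the main obstacle: the three coassociativity identities, the change of variables, and the antipode computations are all routine, and all of the substance is concentrated in upgrading the one-sided invertibility of $S(\beta)$ furnished by the antipode axiom to genuine invertibility — exactly where the noetherian hypothesis is used essentially, via Dedekind-finiteness of $R$ and the bijectivity of the antipode on $R$.
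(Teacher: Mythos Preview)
Your argument is correct and reaches the same conclusion as the paper, but by a mildly different route, so a brief comparison is worthwhile.

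The paper first shows that $\alpha=(I\otimes\epsilon)(s)$ is invertible: it applies $m\circ(I\otimes m)\circ(I\otimes S\otimes I)$ to the identity $(I\otimes\Delta)(s)\cdot(1\otimes s)=(\Delta\otimes I)(s)$ to obtain $\alpha\bigl(\sum S(s_1)s_2\bigr)=1$, and then uses only that $R$ is a domain to upgrade this to two-sided invertibility. It then performs two successive substitutions, $x\mapsto\alpha^{-1}x$ (yielding $\Delta(x)=1\otimes x+x\otimes\beta+w$) and $x\mapsto x\beta^{-1}$, with the grouplikeness of the new $\beta$ read off from coassociativity in between. Your approach instead targets $\beta=(\epsilon\otimes I)(t)$, derives $\Delta(\beta)=t(\beta\otimes 1)$ from the companion coassociativity identity for $t$, applies the antipode axiom to get $S(\beta)\cdot\bigl(\sum S(t_1)t_2\bigr)=1$, and then does a single substitution $x\mapsto\beta^{-1}x$, after which the coefficient $g=\beta^{-1}\alpha$ is shown grouplike directly. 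Both arguments are short and of comparable difficulty; yours is a touch more economical in the number of substitutions, while the paper's avoids invoking bijectivity of $S$ at the invertibility step.

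One small remark: your claim that the noetherian hypothesis is used ``essentially'' for Dedekind-finiteness is stronger than necessary. Since $R\otimes_kR$ is a domain, $R$ itself is a domain, and domains are already Dedekind-finite; so the only genuine use of noetherianity in your argument is the appeal to Skryabin for bijectivity of $S$ (to pass from $S(\beta)$ being a unit to $\beta$ being a unit). In fact even that can be sidestepped: from your identity $\Delta(\alpha)=s(1\otimes\alpha)$, applying $m\circ(S\otimes I)$ gives $\bigl(\sum S(s_1)s_2\bigr)\alpha=1$, so $\alpha$ is a unit directly in the domain $R$; then $s(1\otimes\beta)=t(\alpha\otimes1)$ together with $\Delta(\beta)=t(\beta\otimes1)$ yields, after applying $m\circ(I\otimes S)$ to the latter, $\bigl(\sum t_1 S(t_2)\bigr)\beta=\ldots$ --- or one can simply proceed as the paper does. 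This does not affect the validity of your proof, only the sharpness of the concluding commentary.
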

 \begin{proof}
By the assumption that $\Delta(x)$ has the form of  Equation (\ref{eq:Delta}) with $v=0$,  we get
 \begin{align} 
 \label{newte5}(I\otimes \Delta)(s)\cdot(1\otimes s)&=(\Delta\otimes I)(s)
 \end{align} from Equation (\ref{te5}).
Applying $I\otimes S\otimes I$ to Equation (\ref{newte5}), we obtain that
\[(I\otimes S\otimes I)\left( (I\otimes \Delta)(s)\cdot(1\otimes s)\right)=(I\otimes S\otimes I)\left((\Delta\otimes I)(s)\right).\]
By the associativity of the multiplication map, i.e., $m\circ(I\otimes m)=m\circ(m\otimes I): R\otimes R\otimes R\longrightarrow R$,
if we apply $m\circ (I\otimes m)=m\circ (m\otimes I)$ to both sides, then we obtain on the left side 
\begin{align*}
&m\circ (I\otimes m)((I\otimes S\otimes I)((I\otimes \Delta)(s)\cdot(1\otimes s)))\\
=&m\circ(I\otimes m)\left(\sum s_1\otimes (\sum S(s_1)S(s_{21})\otimes s_{22}s_2)\right)\\
=&\sum s_1\epsilon(s_2)\left(\sum S(s_1)s_2\right)\\
=&\alpha\left(\sum S(s_1)s_2\right)
\end{align*}
and on the right side
$$m\circ(m\otimes I)\circ(I\otimes S \otimes I)\circ(\Delta\otimes I)(s)=m\circ(\epsilon\otimes I)(s)=\sum \epsilon(s_1)s_2=1.$$
 Therefore, we have \[\alpha\left(\sum S(s_1)s_2\right)=1. \]  
 Since $R$ is a domain and $\alpha$ is left invertible, $\alpha$ is invertible and $\alpha^{-1}=\sum S(s_1)s_2$. Applying $I\otimes \epsilon \otimes I$ to Equation $(\ref{te2})$, and using Equations (\ref{eq:alpha}) and (\ref{eq:beta}), we see that 
\begin{align}\label{alphainver}
 s(1\otimes \beta)&=t(\alpha \otimes 1).
\end{align}
Note that $\alpha$ is a unit, and thus 
\begin{equation} \label{e7} 
s(\alpha^{-1}\otimes \beta)=t.
\end{equation}
Combining Equations $(\ref{te2})$ and $(\ref{e7})$, we have
\begin{equation*}
 (I\otimes \Delta)(s)\cdot(1\otimes s)\cdot(1\otimes \alpha^{-1} \otimes  \beta)=(\Delta \otimes I)(s)\cdot(\Delta (\alpha^{-1})\otimes \beta)\cdot (s\otimes 1).
\end{equation*}
By Equation (\ref{newte5}), we have 
\begin{equation}
 \label{eq:gbeta}
 (\Delta\otimes I)(s)\cdot(1\otimes \alpha^{-1} \otimes  \beta)=(\Delta \otimes I)(s)\cdot(\Delta (\alpha^{-1})\otimes \beta)\cdot (s\otimes 1).
\end{equation}
Applying $(I\otimes I \otimes \epsilon)$ to Equation (\ref{eq:gbeta}) and using the fact that $\epsilon(\beta)=1$, it results that 
\begin{equation}
   \label{eq:deltaalpha}
   \sum \epsilon(s_2)\Delta(s_1)\cdot (1\otimes \alpha^{-1})=\sum \epsilon(s_2)\Delta(s_1)\cdot \Delta (\alpha^{-1}) \cdot s.
\end{equation}
Note again that $R\otimes_k R$ is a domain and $\alpha \ne 0$.
Cancelling $\Delta(\alpha)=\sum \epsilon(s_2)\Delta(s_1)$ from both sides of Equation (\ref{eq:deltaalpha}), we have 
\begin{align}\label{deltaalphainver}1\otimes \alpha^{-1}= \Delta (\alpha^{-1}) \cdot s.
\end{align}
Then
\begin{equation*}
\begin{aligned}
\Delta(\alpha^{-1}x)&=\Delta(\alpha^{-1})\cdot\Delta(x)\\
&=\Delta(\alpha^{-1})\cdot(s(1\otimes x)+t(x\otimes 1)+w)\\
&=1\otimes \alpha^{-1}x+\Delta(\alpha^{-1})\cdot t(x\otimes 1)+ \Delta(\alpha^{-1})w\\
&=1\otimes \alpha^{-1} x+\Delta(\alpha^{-1})\cdot t(\alpha\otimes 1)(\alpha^{-1}x\otimes 1)+ \Delta(\alpha^{-1})w\\
&=1\otimes \alpha^{-1} x+\alpha^{-1}x\otimes \alpha^{-1}\beta+ \Delta(\alpha^{-1})w \,\,(\textrm{By Equations (\ref{alphainver}) and (\ref{deltaalphainver})}).
\end{aligned}
\end{equation*}
Replace $x$, $\beta$ and $w$ by $\alpha^{-1}x$, $\alpha^{-1}\beta$ and $\Delta(\alpha^{-1})w$, respectively. Then we have that 
\begin{align}\label{e8}
 \Delta(x)=1\otimes x+x\otimes \beta+ w.
\end{align}
Using the fact that $(\Delta\otimes I)\circ\Delta(x)=(I \otimes \Delta)\circ\Delta(x)$ along with Equation $(\ref{e8})$, if we compare the coefficients of  $x\otimes 1 \otimes 1$, then we obtain the equation:
$ \Delta(\beta)=\beta\otimes \beta.$
Hence $\beta$ is a grouplike element and thus has inverse. 
Notice that
\begin{equation*}
   \Delta(x\beta^{-1})=\Delta(x)\Delta(\beta^{-1})=\beta^{-1} \otimes x\beta^{-1}+x\beta^{-1}\otimes1 + w\Delta(\beta^{-1}).
\end{equation*} 
To get a simpler form of $S(x)$ later, one can replace $x$ by $x\beta^{-1}$ and 
after a change of variables,  we can assume that 
\begin{equation}
   \label{eq:newdelta}
   \Delta(x)=\beta^{-1} \otimes x+x\otimes 1+ w.
\end{equation}
Using the identity that $m\circ(I\otimes S)\circ\Delta(x)=m\circ(S\otimes I)\circ \Delta(x)=\epsilon(x)$ and Equation (\ref{eq:newdelta}), a direct computation shows that  $S(x)=-\beta(x+\sum w_1S(w_2))$ and $\beta\sum w_1S(w_2)=\sum S(w_1)w_2$.
 \end{proof}
As a consequence, we have the following corollary.
\begin{corollary}\label{c1}
Let $R$ be a noetherian Hopf $k$-algebra and suppose that $T=R[x;\sigma, \delta]$ admits a Hopf algebra structure extending that of $R$. Suppose that $R\otimes_k R$ is a domain. Then after a change of variables for the variable $x$, we have $\Delta(x)=\beta^{-1}\otimes x+ x\otimes 1 + w$, where $\beta$ is a grouplike element in $R$ and $w=\sum w_1\otimes w_2\in R\otimes_k R$ and thus condition $(iii)$ in Definition \ref{defn:BOZZ} follows from conditions $(i)$ and $(ii)$.
In particular, the Question \ref{quest} has an affirmative answer under the above hypotheses.
\end{corollary}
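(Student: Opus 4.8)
The plan is to obtain the corollary by simply chaining together Lemma \ref{l1} and Lemma \ref{l2}, while keeping careful track of the successive changes of the variable $x$. First I would recall the discussion preceding Lemma \ref{l1}: because $R\otimes_k R$ is a domain, \cite[Lemma 1, \S2.2]{Gzhuang} forces $\Delta(x)$ to have the shape of Equation (\ref{eq:Delta}), and replacing $x$ by $x-\epsilon(x)$ (with the induced adjustment of $\delta$ described there) I may assume $\epsilon(x)=0$ while still having $T=R[x;\sigma,\delta]$.

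Next I would apply Lemma \ref{l1} to this normalized $x$: after a further change of variable, together with the corresponding adjustments to $\sigma$ and $\delta$, one gets $\Delta(x)=s(1\otimes x)+t(x\otimes 1)+w$ with $s,t,w\in R\otimes_k R$. Feeding this into Lemma \ref{l2} produces one more change of variable after which $\Delta(x)=\beta^{-1}\otimes x+x\otimes 1+w$, where $\beta$ is a grouplike element of $R$ (so that $\beta^{-1}=S(\beta)\in R$) and $w=\sum w_1\otimes w_2\in R\otimes_k R$; the extra conclusions of Lemma \ref{l2} about $S(x)$ are not needed here. Since each of the changes used is of the form $x\mapsto ux+c$ or $x\mapsto ux$ for a unit $u\in R$ and a scalar $c\in k$, their composition is again such a change, so this establishes the displayed formula for $\Delta(x)$ in a single step.

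Finally I would observe that the expression $\Delta(x)=\beta^{-1}\otimes x+x\otimes 1+w$ is literally of the form required in part (iii) of Definition \ref{defn:BOZZ}, namely $\Delta(x)=a\otimes x+x\otimes b+v(x\otimes x)+w$ with $a=\beta^{-1}\in R$, $b=1\in R$, and $v=0\in R\otimes_k R$. Hence, whenever conditions (i) and (ii) of Definition \ref{defn:BOZZ} hold, condition (iii) holds after a change of the variable $x$, which is exactly an affirmative answer to Question \ref{quest} under the standing hypotheses that $R$ is noetherian and $R\otimes_k R$ is a domain. I expect no real obstacle at this stage: all the substance lies in Lemmas \ref{l1} and \ref{l2}, whose engine is the domain hypothesis on $R\otimes_k R$ (used repeatedly to cancel nonzero factors, e.g. to pass from $v(\alpha\otimes 1)=0$ to $v=0$), and the only thing to check in the corollary itself is the bookkeeping that the three changes of variable compose to one and that the final form matches Definition \ref{defn:BOZZ}(iii).
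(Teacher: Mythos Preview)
Your proposal is correct and matches the paper, which states the corollary without proof as an immediate consequence of Lemmas \ref{l1} and \ref{l2}. One minor quibble on the inessential bookkeeping: Lemma \ref{l1} involves no change of variable beyond the normalization $\epsilon(x)=0$, and the substitution $x\mapsto x\beta^{-1}$ in Lemma \ref{l2} is a right multiplication (equivalently $x\mapsto \sigma(\beta^{-1})x+\delta(\beta^{-1})$, with constant term in $R$ rather than in $k$), so your description of the changes as $x\mapsto ux+c$ with $c\in k$ is not quite accurate---but this does not matter, since the corollary only requires that \emph{some} Ore-generating change of variable works, and each step clearly preserves that property.
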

 This corollary allows us to immediately obtain our main result.
\begin{proof}[Proof of Theorem \ref{thm:main1}]
 Suppose that $R\otimes_k R$ is a domain. Let $T=R[x;\sigma,\delta]$ be a Hopf algebra with a Hopf structure extending that of the Hopf algebra $R$. 
 Then we have $(i)$ follows from Lemmas \ref{l1} and \ref{l2}.
 
The maps $\Delta$, $\epsilon$ and $S$ of $T$ must preserve the relation $xr=\sigma(r)x+\delta(r)$. In particular, we have the following equations:
 \begin{align*}
 &\Delta(x)\Delta(r)=\Delta(\sigma(r))\Delta(x)+\Delta(\delta(r));\\ 
 &\epsilon(x)\epsilon(r)=\epsilon(\sigma(r))\epsilon(x)+\epsilon(\delta(r));\\
 & S(r)S(x)=S(x)S(\sigma(r))+S(\delta(r)).
\end{align*}
Using arguments from \cite[Theorem 1.3]{Pa} and \cite[Theorem, \S 2.4]{Gzhuang}, we obtain $(ii)$ and $(iii)$.

Conversely, a similar argument to that used in \cite[Theorem 1.3]{Pa} and \cite[Theorem, \S 2.4]{Gzhuang} shows that $(i)$, $(ii)$, and $(iii)$ imply that $T$ is a Hopf algebra with $R$ as a Hopf subalgebra.
\end{proof}
\section{Cocommutative Hopf algebras}
In light of Theorem \ref{thm:main1}, it becomes natural to ask when $R\otimes_k R$ is a domain. Obviously, a necessary condition is that $R$ be a domain, but this is not sufficient in general, even in the case of an algebraically closed base field (see work of Rowen and Saltman \cite{RS}). We focus on the special case: when $R$ is a cocomutative noetherian Hopf algebra of finite Gelfand-Kirillov dimension over an algebraically closed field $k$ of characteristic zero that is a domain, and prove in this case that $R\otimes_k R$ is also a domain. To complete the proof of Theorem \ref{thm:main2}, we will need a result describing when crossed products are domains. The following theorem, whose proof can be found in the book of Passman \cite[Corollary 37.11]{pd}
\begin{theorem}\label{t5}
Let $R$ be an Ore domain and let let $G$ be a group and suppose that $G$ has a finite subnormal series
$$\{1\}=G_0\lhd G_1\lhd \cdots \lhd G_n=G$$ with each quotient $G_{i+1}/G_i$ locally polycyclic-by-finite. If $G$ is torsion-free then the crossed product $R\star G$ is an Ore domain. In particular if $R$ is an Ore domain and $G$ is a torsion-free polyclic-by-finite group then the smash product $R\# G$ is a domain.
\end{theorem}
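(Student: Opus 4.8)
The plan is to follow the standard line of argument for crossed-product zero-divisor problems, which is what underlies Passman's \cite[Corollary 37.11]{pd}, reducing in stages to a situation governed by the Farkas--Snider theorem.

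First I would reduce to the case that $G$ is finitely generated: any witness to a zero divisor, or to a failure of the right Ore condition, in $R\star G$ involves only finitely many elements of $G$, so it already lies in $R\star H$ for some finitely generated subgroup $H\le G$, and being a right Ore domain passes to directed unions of subrings (a directed union of domains is a domain, and if $a,s$ with $s\ne 0$ both lie in a member $R\star H_i$, which is Ore, then $a(R\star H_i)\cap s(R\star H_i)\ne 0$ furnishes a common right multiple in $R\star G$). Intersecting the given subnormal series with $H$ preserves torsion-freeness and the hypothesis that the factors are locally polycyclic-by-finite, so one may assume $G$ is finitely generated — though not necessarily polycyclic-by-finite, as the case $G=\Z\wr\Z$ shows. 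Next I would localize: $R\setminus\{0\}$ is an Ore set in $R\star G$, since the automorphisms of $R$ encoded in the crossed product extend to $D:=\Frac(R)$, and the localization is a crossed product $D\star G$ into which $R\star G$ embeds; so it suffices to treat the case where the coefficient ring is a division ring.

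The core case is $G$ torsion-free polycyclic-by-finite. Here $G$ has a finite-index normal subgroup $N$ that is poly-(infinite cyclic): pass to a finite-index polycyclic subgroup and then to its core, which is torsion-free polycyclic. Choosing a subnormal series of $N$ with infinite cyclic factors, $D\star N$ is an iterated skew Laurent extension $(D\star N_j)[t_j,t_j^{-1};\sigma_j]$ of $D$, hence — a skew Laurent extension by an automorphism of a Noetherian domain being again a Noetherian domain — a Noetherian domain. Then $D\star G$ is a finitely generated free $D\star N$-module, so it is Noetherian, and it is prime because the torsion-free group $G$ has no nontrivial finite normal subgroup; by Goldie's theorem it is a domain precisely when its Goldie rank is $1$. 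Establishing that is exactly the Farkas--Snider theorem together with its extension to an arbitrary coefficient domain (via Cliff's argument and Moody's induction theorem, which expresses $G_0$ of the crossed product through inductions from finite subgroups): torsion-freeness of $G$ is what prevents the Goldie rank from growing in passing from $N$ up to $G$.

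Finally I would bootstrap from this core case. One cannot simply strip off the subnormal series one factor at a time, since a factor $G_{i+1}/G_i$ may carry torsion and then a crossed product over it need not be a domain even over an Ore domain; instead one combines the directed-union reduction above with the fact that every subgroup of $G$ is torsion-free to descend the general locally-polycyclic-by-finite-factor situation to the polycyclic-by-finite one already handled. Unwinding the localization then gives that $R\star G$ is a domain, and Goldie's theorem upgrades this to an Ore domain; the ``in particular'' clause is the core case with an arbitrary Ore domain in place of the division ring. I expect the one genuinely hard point to be the step of crossing the finite quotient $G/N$ in the core case: it uses torsion-freeness in an essential way and is precisely where the $K$-theoretic Farkas--Snider/Cliff/Moody input is needed, and it is not something that iterated skew-polynomial bookkeeping can supply.
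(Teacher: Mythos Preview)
The paper does not give its own proof of this statement: it explicitly says the proof ``can be found in the book of Passman \cite[Corollary~37.11]{pd}'' and then quotes the result as a black box for use in the proof of Theorem~\ref{thm:main2}. So there is nothing in the paper to compare your proposal against.

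That said, your outline is a faithful sketch of the argument underlying Passman's result: the reduction to finitely generated subgroups via directed unions, the localization to division-ring coefficients, the core torsion-free polycyclic-by-finite case handled by building up $D\star N$ as an iterated skew Laurent extension and then crossing the finite quotient $G/N$ using the Farkas--Snider/Cliff/Moody machinery, and finally the bootstrap back to the general hypothesis. You have also correctly flagged the genuinely hard step (the finite-index passage) and the reason torsion-freeness enters there. One small point worth tightening in your last paragraph: after showing $D\star G$ is an Ore domain you say ``unwinding the localization then gives that $R\star G$ is a domain, and Goldie's theorem upgrades this to an Ore domain'' --- but $R\star G$ need not be Noetherian in the general locally-polycyclic-by-finite-factor case, so Goldie does not apply directly; the Ore property for $R\star G$ should instead be pulled back from $D\star G$ via the Ore localization $R\setminus\{0\}$ (clearing denominators), or obtained from your directed-union argument on the finitely generated pieces.
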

Using this result, we can give the proof of Theorem \ref{thm:main2}.
 \begin{proof} [Proof of Theorem \ref{thm:main2}]
 By a refinement of a result of Kostant (see Bell and Leung \cite[Proposition 2.1]{BL14}), we have that $R\cong U(\mathcal{L}_0)\# kH$ where $\mathcal{L}_0$ is a finite-dimension Lie algebra over $k$ and $H$ is a finitely generated nilpotent-by-finite group that acts on $\mathcal{L}_0$. Hence, we have $R\otimes_k R=U(\mathcal{L}_0\oplus \mathcal{L}_0)\#k[H\times H]$. Let $\mathcal{L}$ denote the Lie algebra $\mathcal{L}_0\oplus \mathcal{L}_0$ and let $G$ denote $H\times H$.  Then $R\otimes_k R=U(\mathcal{L}) \# kG$, where $G$ acts on $U(\mathcal{L})$ in the natural way induced from the action of $H$ on $\mathcal{L}_0$. 
 
 Since $R$ is a domain, $H$ is torsion-free, and thus $G$ is also torsion-free. Moreover, $G$ is also finitely generated and nilpotent-by-finite, since $H$ is. Since $\mathcal{L}$ is finite-dimensional, we have that $U(\mathcal{L})$ is an Ore domain; moreover $G$ is a torsion-free polycyclic-by-finite group, and so we see that $R\otimes_k R$ is a domain from Theorem \ref{t5}.
 \end{proof}
 \begin{corollary}
Let $k$ be an algebraically closed field $k$ of characteristic zero and let $R$ be a noetherian cocommutative Hopf algebra of finite Gelfand-Kirillov dimension over $k$ which is a domain. Let $T=R[x; \sigma, \delta]$ be an Ore extension over $R$. Then $T$ has a Hopf algebra structure extending that of $R$ if and only if after a change of variables we have the following:
\begin{enumerate}
\item there exists a grouplike element $\beta$ of $R$ and $w\in R\otimes_k R$ such that $\Delta(x)=\beta^{-1}\otimes x+x\otimes 1 +w$, and $S(x)=-\beta(x+\sum w_1S(w_2))$ and $\beta\sum w_1S(w_2)=\sum S(w_1)w_2$;
\item there is a character $\chi : R \longrightarrow k$ such that \[ \sigma(r)=\sum\chi(r_1)r_2=\sum \beta^{-1} r_1\beta \chi(r_2)=\textrm{ad}(\beta^{-1} )\circ \tau^r_{\chi}(r);\]
for all $r\in R$, where $\sigma$ is a left winding automorphism $\tau^l_{\chi}$, and is the composition of the corresponding right winding automorphism with conjugation by $\beta $;
\item the $\sigma$-derivation $\delta$ satisfies the relation
\[ \Delta\delta(r)-\sum \delta(r_1)\otimes r_2-\sum \beta ^{-1}r_1\otimes \delta(r_2)-w\Delta(r)-\Delta\sigma(r)w=0\]
and \[w\otimes 1+(\Delta\otimes I)(w)=\beta^{-1} \otimes w+(I\otimes \Delta)w. \]
\end{enumerate}
\label{cor1}
 \end{corollary}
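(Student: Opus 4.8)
The plan is to deduce this corollary directly by combining Theorem \ref{thm:main1} with Theorem \ref{thm:main2}; essentially no new argument is needed beyond checking that the hypotheses line up. First I would record that $R$ satisfies the hypotheses of Theorem \ref{thm:main2}: by assumption $k$ is algebraically closed of characteristic zero and $R$ is a noetherian cocommutative Hopf $k$-algebra of finite Gelfand-Kirillov dimension that is a domain. Hence Theorem \ref{thm:main2} applies and yields that $R\otimes_k R$ is a domain.

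Next I would invoke Theorem \ref{thm:main1} with this same $R$ and the given Ore extension $T=R[x;\sigma,\delta]$. Indeed $R$ is noetherian and we have just shown that $R\otimes_k R$ is a domain, so both hypotheses of Theorem \ref{thm:main1} are met. The conclusion of Theorem \ref{thm:main1} is precisely the asserted equivalence, with conditions $(i)$, $(ii)$, $(iii)$ transcribed verbatim (the grouplike element $\beta$, the winding automorphism description of $\sigma$, and the two identities constraining $\delta$ and $w$), so the proof is complete.

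The only point requiring any care is confirming that the hypotheses of the two theorems match on the nose: ``$k$ algebraically closed of characteristic zero, $R$ noetherian cocommutative of finite Gelfand-Kirillov dimension, $R$ a domain'' is exactly what is needed to enter Theorem \ref{thm:main2}, and its output ``$R\otimes_k R$ a domain'' together with noetherianity of $R$ is exactly the input of Theorem \ref{thm:main1}. Since these coincide, there is no genuine obstacle: Corollary \ref{cor1} is a formal consequence of the two main theorems, stated separately because it isolates the most concrete and verifiable class of Hopf algebras to which the characterization of Theorem \ref{thm:main1} is known to apply unconditionally.
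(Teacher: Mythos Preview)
Your proposal is correct and matches the paper's own proof essentially verbatim: apply Theorem~\ref{thm:main2} to conclude that $R\otimes_k R$ is a domain, and then invoke Theorem~\ref{thm:main1}. There is nothing to add.
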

 \begin{proof}
 Theorem \ref{thm:main2} tells us that in this case $R\otimes_kR$ is a domain. Then
 the claim immediately follows from Theorem \ref{thm:main1}.
 \end{proof}
 \section{Concluding remarks}
 We note that in the paper \cite{Bell}, a version of Corollary \ref{cor1} was proved for finitely generated commutative Hopf algebras that are domains over an algebraically closed field $k$.  We note that this follows immediately from our Theorem \ref{thm:main1}, as such an algebra $R$ is of the form $\mathcal{O}(G)$ for $G$ an irreducible affine algebraic group over $k$ and so $R\otimes_k R$ is just $\mathcal{O}(G\times G)$, which is again a domain.  In Corollary \ref{cor1}, we have the hypothesis that the ring have finite Gelfand-Kirillov dimension.  Conjecturally, the result should hold for noetherian cocommutative Hopf algebras $R$ over an algebraically closed field of characteristic zero, since such algebras are isomorphic to algebras of the form $U(L)\# G$; since $R$ is faithfully flat over both $U(L)$ and the group algebra $k[G]$, if $R$ is noetherian, then so must these two subalgebras.  Conjecturally, enveloping algebras are noetherian if and only if $L$ is finite-dimensional and $k[G]$ is noetherian if and only if $G$ is polycyclic-by-finite.  Hence Theorem \ref{t5} can be applied to give that $R\otimes_k R\cong U(L\oplus L)\# (G\times G)$ is a domain if $R$ is a domain, since $G$ is necessarily torsion-free.  In light of this, we ask the following questions.
 \begin{question}
 Let $R$ be a cocommutative noetherian Hopf algebra over an algebraically closed field $k$ of characteristic zero.  Is $R\otimes_k R$ a domain if $R$ is a domain?
 \end{question}
 If the reader feels like being more ambitious, we raise the following question, which, combined with Theorem \ref{thm:main1}, would give an affirmative answer to Question \ref{quest} in the case when $R$ is a domain over an algebraically closed field if 
 it could be answered affirmatively.
 \begin{question}
 Let $k$ be an algebraically closed field and let $R$ be a noetherian Hopf algebra that is a domain. Is $R\otimes_k R$ a domain?
 \end{question}
 \section*{Acknowledgments} The author gratefully acknowledges her advisor Jason Bell for his constant encouragement and advice. The author also thanks Ken Brown for useful comments and thanks the referee for suggesting an improvement to the proofs of Lemmas \ref{l1} and \ref{l2}.

\end{document}